 \newcommand\house[1]{\mkern2mu\begin{array}{|@{\:}c@{\:}|}\hhline{|-|}#1\end{array}\mkern1mu}
\newtheorem{theorem}{Theorem}
\newtheorem{lemma}[theorem]{Lemma}
\newtheorem{proposition}[theorem]{Proposition}
\numberwithin{equation}{section}
\def\C{{\mathbb C}}
\def\D{{\mathbb D}}
\def\N{{\mathbb N}}
\def\R{{\mathbb R}}
\def\Z{{\mathbb Z}}
\def\T{{\mathbb T}}
\def\dis{\displaystyle}
\begin{document}

\title{House of algebraic integers symmetric about the unit circle}

\author{Igor E. Pritsker}
\address{Department of Mathematics, Oklahoma State University, Stillwater, OK 74078, U.S.A.}
\email{igor@math.okstate.edu}

\begin{abstract}
 We give a  Schinzel-Zassenhaus-type lower bound for the maximum modulus of roots of a monic integer polynomial
 with all roots symmetric with respect to the unit circle. Our results extend a recent work of Dimitrov, who
 proved the general Schinzel-Zassenhaus conjecture by using the P\'olya rationality theorem for a power series with
 integer coefficients, and some estimates for logarithmic capacity (transfinite diameter) of sets. We use an enhancement
 of P\'olya's result obtained by Robinson, which involves Laurent-type rational functions with small supremum norms,
 thereby replacing the logarithmic capacity with a smaller quantity. This smaller quantity is expressed via a weighted
 Chebyshev constant for the set associated with Dimitrov's function used in Robinson's rationality theorem. Our lower
 bound for the house confirms a conjecture of Boyd.
\end{abstract}

\keywords{Algebraic integers, house, integer polynomials, Schinzel-Zassenhaus conjecture, weighted Chebyshev constant}
\subjclass[2010]{11R06, 12D10, 30C10, 30C15}


\maketitle

\section{Introduction and main result}

The subject of algebraic integers located near (or on) the unit circle $\T:=\{z\in\C:|z|=1\}$ is classical.
Kronecker \cite{Kr} proved that if an algebraic integer and all of its conjugates are located in the closed unit disk $\overline{\D}:=\{z\in\C:|z|\le 1\}$,
then it is either a root of unity or zero. For an algebraic integer $\alpha=\alpha_1$, with the complete set of conjugates $\{\alpha_k\}_{k=1}^n,$
the house of this algebraic integer is defined by
\[
\house{\alpha} := \max_{1\le k\le n} |\alpha_k|.
\]
The result of Kronecker may also be written in the form: If $\alpha$ is a non-zero algebraic integer that is not a root of unity, then
$\house{\alpha}>1.$ Another form of the same result can be recorded by using the Mahler measure of $\alpha$ defined by
\[
M(\alpha) := \prod_{k=1}^{n} \max(1,|\alpha_k|).
\]
Thus if $\alpha$ is a non-zero algebraic integer that is not a root of unity, then $M(\alpha)>1.$ Both versions indicate that any algebraic
integer that is not a root of unity must either be off the unit circle itself, or have conjugates off the unit circle. A natural question is how far away
those algebraic integers should be from the unit circle. This brings us to celebrated Lehmer's conjecture \cite{Le}, see also \cite{Bo}, \cite{EW},
\cite{Sm1}, \cite{Sm2} for more details and references. Lehmer observed from computations that the smallest Mahler measure of a non-zero and
non-cyclotomic algebraic integer seems to be coming from the largest (in absolute value sense) root $\alpha_L$ of the polynomial
$L(z) =  z^{10} + z^9 - z^7 - z^6 - z^5 - z^4 - z^3 + z + 1$. It turns out that all but two roots of Lehmer's polynomial are on the
unit circle, with the remaining roots being $\alpha_L>1$ and $1/\alpha_L.$  This prompted Lehmer to conjecture that any non-zero algebraic integer $\alpha$
that is not a root of unity must satisfy
\[
M(\alpha) \ge M(\alpha_L) = \alpha_L \approx 1.176280818.
\]
Note that $L(z)$ is a reciprocal polynomial, i.e., it satisfies $L(z)=z^{10}L(1/z).$ An algebraic integer of degree $n$ is called reciprocal if its minimal
polynomial $P$ is reciprocal, meaning $P(z)=z^n P(1/z).$ It is not difficult to see that all conjugates of a reciprocal algebraic integer are symmetric
with respect to the unit circle. If $\alpha$ is a root of a non-reciprocal irreducible polynomial with integer coefficients, Smyth \cite{Sm0} proved that
\[
M(\alpha) \ge \theta_0 \approx 1.3247,
\]
where $\theta_0$ is the positive root of $z^3-z-1.$ Lehmer's conjecture was proved for many other classes of algebraic integers, but the case of general
reciprocal $\alpha$ remains open. A related conjecture for the house of algebraic integer was made by Schinzel and Zassenhaus \cite{SZ}:
If $\alpha$ is a non-zero algebraic integer of degree $n$ that is not a root of unity, then
\begin{align} \label{SZ}
\house{\alpha} \ge 1+c/n
\end{align}
for an absolute constant $c>0.$ Note that Lehmer's conjecture implies that of Schinzel and Zassenhaus by the inequality
\[
\house{\alpha} \ge M(\alpha)^{1/n} > 1+\frac{\log M(\alpha)}{n}.
\]
Dimitrov recently proved the conjecture of Schinzel and Zassenhaus by showing that
\begin{align} \label{Dimgen}
\house{\alpha} \ge 2^{\frac{1}{4n}} > 1+\frac{\log 2}{4n},
\end{align}
see Theorem 1 of \cite{Di}. However, questions on the optimal values of $c$ in \eqref{SZ} for specific
classes of algebraic integers remain open. The latter questions were raised by Boyd \cite{Boy} on the bases of computations, see Conjectures (A)-(D)
in his paper. In particular, Boyd conjectured that the optimal (largest possible) lower bounds for the house are all coming from non-reciprocal
algebraic integers, in contrast to Lehmer's conjecture. Moreover, if $n$ is divisible by $3$, then Boyd conjectured that \eqref{SZ} holds with
\begin{align}\label{BC}
c = \frac{3}{2} \log \theta_0 \approx 0.4217.
\end{align}
The best bounds for non-reciprocal algebraic integers are due to Dubickas \cite{Du1} and \cite{Du2}, who showed that \eqref{SZ} holds with
\[
c \approx  0.30965.
\]
Dimitrov proved that for reciprocal algebraic integers with all conjugates off the unit circle \eqref{SZ} holds with
\begin{align} \label{Dimsym}
c = \frac{\log 2}{2} \approx  0.34657,
\end{align}
see Theorem 6 in \cite{Di}. We improve this result to $c \approx 0.44068,$  and thereby confirm Boyd's conjectures in the special case when
$\alpha$ is a non-zero reciprocal algebraic integer with all conjugates outside the unit circle.

\begin{theorem} \label{Bconj}
If $\alpha$ is a reciprocal algebraic integer of degree $n$, with complete set of conjugates $\{\alpha_k\}_{k=1}^n \bigcap \T = \emptyset,$ then
\begin{align}\label{c-conj}
\house{\alpha} \ge (1+\sqrt{2})^{\frac{1}{2n}} > 1 + \frac{\log(1+\sqrt{2})}{2n}.
\end{align}
\end{theorem}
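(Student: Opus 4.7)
The plan is to follow Dimitrov's framework from \cite{Di} and improve the constant by replacing his use of the P\'olya rationality theorem with Robinson's sharper version. Argue by contradiction: assume $\house{\alpha}<\rho:=(1+\sqrt{2})^{1/(2n)}$, and let $P(z)\in\Z[z]$ be the minimal polynomial of $\alpha$. Since $P$ is reciprocal of degree $n$ with $\{\alpha_k\}\cap\T=\emptyset$, its roots pair as $\{\alpha_k,1/\alpha_k\}$ and all lie in the thin annulus $1/\rho<|z|<\rho$. Attach to $P$ the algebraic function $f(z)=\sqrt{D(z)}$ of Dimitrov's reciprocal-case construction, where $D\in\Z[z]$ is the palindromic polynomial naturally built from $P$. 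The palindromic structure of $P$, combined with the hypothesis $\{\alpha_k\}\cap\T=\emptyset$, makes $f$ single-valued and holomorphic on $\hat\C\setminus E$ for a compact set $E$ symmetric under $z\mapsto 1/z$, disjoint from $\T$, and contained in $\{1/\rho\le|z|\le\rho\}$. By construction, the Laurent expansion of $f$ at $\infty$ has integer coefficients.

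Next I would apply Robinson's refinement of P\'olya's theorem: rather than requiring the logarithmic capacity $\mathrm{cap}(E)<1$ to conclude that $f$ is rational, Robinson's version needs only $\tau_w(E)<1$, where $\tau_w(E)\le\mathrm{cap}(E)$ is a weighted Chebyshev constant whose weight $w$ is dictated by a Laurent-type rational denominator matched to Dimitrov's square-root function. The strict inequality $\tau_w(E)<\mathrm{cap}(E)$ is exactly what will upgrade Dimitrov's constant $c=\log 2/2$ in \eqref{Dimsym} to $c=\log(1+\sqrt{2})/2$.

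The main obstacle, and the technical heart of the argument, is the explicit evaluation of $\tau_w(E)$. Using the $\T$-symmetry of $E$, I would pass to the Joukowski variable $w=z+1/z$ to flatten $E$ to a real set and reduce the weighted Chebyshev problem to a classical extremal problem with a real-analytic weight on a symmetric real interval. I expect the computation to yield $\tau_w(E)$ in closed form, either via weighted Chebyshev polynomials or equivalently by a conformal map of the exterior of $E$ onto a standard annulus, and I expect the threshold $\tau_w(E)=1$ to reduce to a quadratic whose relevant root produces the silver ratio $1+\sqrt{2}$. The hypothesis $\rho^{2n}<1+\sqrt{2}$ then forces $\tau_w(E)<1$, and Robinson's theorem makes $f$ rational.

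Finally, the rationality of $f=\sqrt{D}$ forces $D$ to be a perfect square in $\Z[z]$, which, combined with the irreducibility of the minimal polynomial $P$, the hypothesis that $\alpha$ is neither zero nor a root of unity, and the condition $\{\alpha_k\}\cap\T=\emptyset$, yields an arithmetic contradiction and establishes \eqref{c-conj}. The step that most of the work will go into is the third one — setting up the weighted Chebyshev problem on the $w$-side and doing the explicit extremal computation that produces the constant $1+\sqrt{2}$.
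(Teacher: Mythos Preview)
Your broad strategy is right --- Dimitrov's auxiliary function, Robinson's sharpening of P\'olya, and a weighted Chebyshev constant --- but several concrete steps are either missing or would fail as stated.

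First, you understate what Robinson's theorem requires. It is not merely a sharper bound replacing $\mathrm{cap}(E)$ by some $\tau_w(E)$ for a function with a single integer Laurent expansion at $\infty$; it needs integer expansions at \emph{both} $0$ and $\infty$, and the Laurent test function $h(z)=\sum_{k=-l}^m A_k z^k$ must have $|A_m|\ge 1$ and $|A_{-l}|\ge 1$. The paper's key observation is that the reciprocity of $P$ forces $F(z)=z^nF(1/z)$, so the Maclaurin integrality (Dimitrov's Proposition) propagates to integrality of the expansion at $\infty$. This two-sided integrality is precisely what makes Robinson applicable and is why the specific weight $w(z)=|z|^{-1/2}$ appears (it comes from writing $|h|=|z|^{-m}|Q_{2m}|$). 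Your sketch mentions only the expansion at $\infty$; without the second expansion Robinson gives nothing beyond P\'olya.

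Second, your proposed route to the extremal constant via the Joukowski substitution $w=z+1/z$ will not work in general. The set $E$ is a union of radial segments $[\alpha_k^{2},1/\bar\alpha_k^{2}]\cup[\alpha_k^{4},1/\bar\alpha_k^{4}]$ (so it sits in $\{\house{\alpha}^{-4}\le|z|\le\house{\alpha}^4\}$, not in $\{1/\rho\le|z|\le\rho\}$), at arbitrary angles $\theta$, and Joukowski sends a radial segment at angle $\theta$ to a non-real arc unless $\theta\in\{0,\pi\}$; there is no ``flattening to a real set''. The paper instead proves a potential-theoretic identity $t_w=e^{-g_G(0,\infty)/2}\sqrt{\mathrm{cap}(E)}$, then bounds both factors by symmetrization: Dubinin's theorem for $\mathrm{cap}$ and Solynin's harmonic-measure theorem for $g_G(0,\infty)$, reducing to $n$ equally spaced radial segments $E^*$. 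The map $z\mapsto z^n$ then sends $E^*$ to a single real interval $I=[\house{\alpha}^{-4n},\house{\alpha}^{4n}]$, where an explicit Green-function computation gives $t_w(I)=(\house{\alpha}^{2n}-\house{\alpha}^{-2n})/2$ and hence the threshold $\house{\alpha}^{2n}\ge 1+\sqrt2$. This symmetrization step is the technical heart and is absent from your sketch.

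Third, your endgame is too quick. Rationality of $\sqrt{P_2P_4}$ does not by itself contradict anything; one needs Dimitrov's dichotomy (rational $\Leftrightarrow$ $P$ cyclotomic, \emph{provided $P_2$ is not a perfect square}), and the residual case $P_2=R^2$ must be handled separately --- in the paper by an induction on $n$ using $P(z)=R(z^2)$. Also, to feed Robinson's theorem one must check that the extremal $h$ can be taken with $|A_{-l}|\ge 1$; the paper does this via a separate lemma showing $|Q_m(0)|^{1/m}\to 1$ for weighted Fekete polynomials.
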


It is clear that the degree $n$ must be even in the settings of the above theorem as one half of conjugates are located inside $\T$ and the other
half is outside $\T$ due to symmetry.

We give an outline of proof of Theorem \ref{Bconj} in the next section. Some technical results from potential theory that are necessary for the
proof are established in Section 3. A complete proof of the main result is contained in Section 4.

\section{Essential ideas of the proof} \label{Outline}

We first discuss a sketch of Dimitrov's proof for \eqref{Dimgen}. As the non-reciprocal case is known (cf. \cite{Du1}), we let $\alpha$ be a reciprocal
algebraic integer with the complete set of conjugates $\{\alpha_k\}_{k=1}^n,$ where $\alpha=\alpha_1$, and with the minimal polynomial
\[
P(z)=\prod_{k=1}^n (z-\alpha_k).
\]
Since $P$ is reciprocal, we have that $P(0) = 1.$ Define the auxiliary polynomials
\begin{align*}
  P_2(z)=\prod_{k=1}^n (z-\alpha_k^2) \quad \mbox{and} \quad P_4(z)=\prod_{k=1}^n (z-\alpha_k^4),
\end{align*}
and note that $P_2,P_4\in\Z[z].$ The arithmetic information was captured by Dimitrov in the function
\begin{align} \label{Dimfun}
D(z) := \sqrt{P_2(z) P_4(z)}.
\end{align}
We state the following result, condensed from Proposition 2.2 and Lemma 2.3 of \cite{Di}.

\begin{proposition} \label{Dimprop}
The Maclaurin series of $D(z)$ has integer coefficients. Assuming that $P_2$ is not a perfect square, we have that
$P$ is cyclotomic if and only if $D(z)$ is rational.
\end{proposition}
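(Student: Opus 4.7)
The plan is to verify the three claims in turn: $P_2,P_4\in\Z[z]$ with $P_2(0)=P_4(0)=1$; $D(z)\in 1+z\Z[[z]]$; and, under the hypothesis that $P_2$ is not a perfect square, $D\in\Q(z)\Leftrightarrow P$ is cyclotomic. The first step is immediate: the coefficients of $P_2,P_4$ are elementary symmetric functions in the algebraic integers $\{\alpha_k^2\},\{\alpha_k^4\}$, hence rational integers fixed by $\mathrm{Gal}(\overline{\Q}/\Q)$; the reciprocal condition with no roots on $\T$ forces $n$ even and $\prod_k\alpha_k=1$, giving $P_2(0)=P_4(0)=1$, so $P_2(z)P_4(z)\in 1+z\Z[z]$ admits a formal square root $D(z)\in 1+z\Q[[z]]$.

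To obtain integer coefficients I would factor
\begin{equation*}
P_2(z)P_4(z) = P_3(z)^2\cdot G(z), \qquad P_3(z):=\prod_k(z-\alpha_k^3)\in\Z[z],
\end{equation*}
via the identity $(z-\alpha^2)(z-\alpha^4)=(z-\alpha^3)^2-z\alpha^2(1-\alpha)^2$. Here $P_3(0)=1$ and $G\in 1+z\Z[[z]]$; it remains to show $\sqrt{G}\in\Z[[z]]$, which I would do prime by prime. For odd $p$, Newton iteration for $T^2=G$ starting at $T_0=1$ converges in $\Z_p[[z]]$ since $2$ is a unit there. For $p=2$, the key is a chain of congruences: first $P_2\equiv P\pmod 2$ from the Frobenius identity $P(z)^2\equiv P(z^2)\pmod 2$ combined with $P_2(z^2)=P(z)P(-z)$; then $P_4\equiv P_2\pmod 4$ from $P_4(z^2)=P_2(z)P_2(-z)$ and the observation that when $P_2=P+2Q$ the cross term $P(z)Q(-z)+Q(z)P(-z)$ vanishes modulo $2$ (because $-z\equiv z\pmod 2$). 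Combining yields $P_2P_4\equiv P^2\pmod 4$, whence $G\equiv(P/P_3)^2\pmod 4$ in $\Z_2[[z]]$, and Hensel's lemma lifts this to $\sqrt{G}\in 1+z\Z_2[[z]]$. Assembling the $p$-adic pieces gives $\sqrt{G}\in\Z[[z]]$, so $D=P_3\cdot\sqrt{G}\in 1+z\Z[[z]]$.

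For the equivalence, $D^2=P_2P_4\in\Z[z]$ and $D\in\Q(z)$ together force $D\in\Z[z]$ (write in lowest terms and use UFD divisibility in $\Q[z]$), so rationality of $D$ is equivalent to $P_2P_4$ being a perfect square in $\Z[z]$. For $(\Rightarrow)$, if $P_2$ is not a square then some irreducible $f\in\Z[z]$ divides $P_2$ with odd multiplicity, and squareness of $P_2P_4$ forces $f\mid P_4$; hence a root $\gamma$ of $f$ satisfies $\gamma=\alpha_k^2=\alpha_j^4$ for some indices, giving $\alpha_k=\pm\alpha_j^2$. By Galois equivariance this is equivalent to $\pm\alpha^2$ being a conjugate of $\alpha$, so iteratively $\alpha^{2^m}$ lies in the finite set $\{\pm\alpha_l\}$ for every $m\geq 0$, forcing $\alpha^{2^N}=\pm\alpha^{2^M}$ for some $N>M$; hence $\alpha$ is a root of unity and $P$ is cyclotomic. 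For $(\Leftarrow)$, if $P$ is cyclotomic then the roots of $P_2,P_4$ are all roots of unity, and a bookkeeping count on cyclotomic factors (using that $\zeta\mapsto\zeta^2$ sends primitive $d$-th roots to primitive $d/\gcd(d,2)$-th roots) shows each cyclotomic factor appears with the same parity in $P_2$ and $P_4$, so $P_2P_4$ is a perfect square and $D\in\Z[z]$.

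The main obstacle I anticipate is the $2$-adic integrality in step (ii): a generic square root of an element of $1+z\Z[[z]]$ has only rational coefficients, so the proof must genuinely exploit the algebraic relationship between $P_2$ and $P_4$, namely that the roots of $P_4$ are the squares of the roots of $P_2$. The sharper congruence $P_2P_4\equiv P^2\pmod 4$, rather than merely $\pmod 2$, is the crucial input; once in hand, Hensel's lemma and prime-by-prime assembly are standard.
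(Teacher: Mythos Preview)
The paper does not prove this proposition: it is quoted from Dimitrov's paper \cite{Di} (Proposition 2.2 and Lemma 2.3 there) and used as a black box. Your proposal is therefore not a comparison target but rather a reconstruction of Dimitrov's argument, and as such it is essentially correct.

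A few remarks on the details. The detour through $P_3$ is unnecessary: once you have $P_4\equiv P_2\pmod 4$ (which you prove correctly), you get $P_2P_4\equiv P_2^2\pmod 4$ directly, and since $P_2(0)=\pm 1$ is a $2$-adic unit you can write $P_2P_4=P_2^2(1+4H)$ in $\Z_2[[z]]$ and take $D=P_2\sqrt{1+4H}$. What you call ``Hensel's lemma'' is really just the fact that $\sqrt{1+4x}\in\Z[[x]]$ (via the Catalan generating series), which suffices; the Hensel hypothesis $v_2(T_0^2-G)>2v_2(f'(T_0))=2$ is not quite met with only a mod-$4$ congruence. Also, your opening sentence imports the hypothesis ``no roots on $\T$'' from Theorem~\ref{Bconj}, but the proposition is stated for an arbitrary reciprocal $\alpha$; fortunately $P(0)=1$ and $P_2(0)P_4(0)=1$ follow from reciprocity alone, so nothing is lost.

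The rationality equivalence is handled correctly: in the forward direction, the key step that $\epsilon\beta^2$ is a conjugate of $\alpha$ for \emph{every} conjugate $\beta$ (with a fixed sign $\epsilon$) follows by applying a Galois automorphism to the single relation $\alpha_k=\epsilon\alpha_j^2$, and then the pigeonhole on the finite set $\{\pm\alpha_l\}$ forces $\alpha$ to be a root of unity. The converse direction is a straightforward case check on $d\pmod 4$ for $P=\Phi_d$, exactly as you outline.
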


Rationality of $D(z)$ is deduced from the well known results due to P\'olya, see the original papers \cite{Po1} and \cite{Po2}, and also
\cite{Ro} for further history and discussion. Consider the function $f(z):=D(1/z).$ It is clear that the Laurent series expansion of $f$ near
$\infty$ consists of powers of $1/z$, and has integer coefficients by Proposition \ref{Dimprop}. This function can be defined as analytic in
$\overline{\C}\setminus K,$ where $K:=\bigcup_{1\le k\le n} ([0,\alpha_k^2]\cup [0,\alpha_k^4]),$ by introducing
cuts from the origin to the zeros of $P_2$ and $P_4$ in order to define appropriate branches of complex square root.
P\'olya's theorem states that if the transfinite diameter of $K$ is less than one, meaning that this set is sufficiently
small, then $f$ must be a rational function, hence $D$ is so too, and hence $P$ is cyclotomic. Complete discussions of
transfinite diameter (identical to logarithmic capacity and Chebyshev constant) may be found in \cite{Ra} and \cite{Ts}. Using the simple fact
that the transfinite diameter is increasing with the set (cf. Theorem 5.1.2 of \cite[p. 128]{Ra}), we enlarge $K$ to $\tilde{K}$
by extending each segment of $K$ to the length $\house{\alpha}^4$. A much deeper result of Dubinin, see Corollary 4.7 of
\cite[p. 118]{Dub}, states that the transfinite diameter of $\tilde{K}$ is largest when all segments of $\tilde{K}$ are equally
spaced in the angular sense. The transfinite diameter of this equally spaced configuration of $2n$ segments is known to be
$(\house{\alpha}^{8n}/4)^{1/(2n)}=\house{\alpha}^4/2^{1/n}$ by Theorem 5.2.5 and Corollary 5.2.4 of \cite[p. 134]{Ra},
where we applied the mapping $z^{2n}$ that transforms $\tilde{K}$ into a segment emanating from the origin of length
$\house{\alpha}^{8n}$, whose transfinite diameter is $\house{\alpha}^{8n}/4.$
It follows from the above argument that if $\house{\alpha}^4/2^{1/n}<1$ then $P$ is cyclotomic. Thus if $P$ is not
cyclotomic, then the opposite inequality holds, and we arrive at \eqref{Dimgen}.

The proof of Theorem \ref{Bconj} follows a similar scheme. We also use Proposition \ref{Dimprop} in the same fashion, but slightly
modify the domain of $D(z)$. Due to symmetry of the set $\{\alpha_k\}_{k=1}^n$ with respect to the unit circle, we can assume
that all conjugates come in symmetric pairs $\alpha_k$ and $\alpha_{n-k+1}$, with $\alpha_k=1/\bar{\alpha}_{n-k+1}$, for $k=1,\ldots,n/2.$
We set
\begin{align}\label{fun}
  F(z) := \prod_{k=1}^{n/2} \sqrt{(z-\alpha_k^2)(z-\alpha_{n-k+1}^2)}
  \prod_{k=1}^{n/2} \sqrt{(z-\alpha_k^4)(z-\alpha_{n-k+1}^4)},
\end{align}
where $\sqrt{(z-\alpha_k^2)(z-\alpha_{n-k+1}^2)}$ is defined as holomorphic in $\C\setminus[\alpha_k^2,\alpha_{n-k+1}^2]$
by selecting a single valued branch of the root that is asymptotic to $z$ at $\infty$, and the same approach is applied to
the second product of roots as well. Thus $F(z)$ is analytic in $\C\setminus E,$ where
\begin{align}\label{E}
E := \bigcup_{1\le k\le n/2} \left([\alpha_k^2,\alpha_{n-k+1}^2] \cup [\alpha_k^4,\alpha_{n-k+1}^4]\right).
\end{align}
Observe that $E$ is a proper subset of  $K$, so that $E$ has smaller transfinite diameter than that of $K.$ Another advantage of
this construction is that we can now use two (Laurent) series expansions of $F$: one about $\infty$, as in Dimitrov's proof, and another one
about the origin. In a neighborhood of the origin, we have $F(z)=D(z)$ so that the Maclaurin series of $F$ has integer coefficients by
Proposition \ref{Dimprop}. For the Laurent series at $\infty$, we use the identity $F(z) = z^n F(1/z),\ z\in\C\setminus E,$ which is
verified in Lemma \ref{expansions}, and conclude that this expansion also has integer coefficients. This enables us to use the following
result of Robinson that enhances the rationality theorem of P\'olya, see \cite[p. 533]{Ro}.

\begin{theorem} \label{Rob}
Suppose that $F(z)$ is analytic in a domain $G$ that contains both $0$ and $\infty$, and that $F(z)$ has Laurent expansions with integer
coefficients of the form:
\[
F(z)=\sum_{k=0}^\infty a_k z^{-k} \ \mbox{near $\infty$} \quad \mbox{and} \quad F(z)=\sum_{k=0}^\infty b_k z^k \ \mbox{near $0$}.
\]
If there is a Laurent-type rational function with complex coefficients of the form
\[
h(z) = \sum_{k=-l}^m A_k z^k, \quad \mbox{with $|A_m|\ge 1$ and $|A_{-l}|\ge 1$,}
\]
such that $|h(z)|<1$ for $z\in E:=\C\setminus G$, then $F(z)$ is rational.
\end{theorem}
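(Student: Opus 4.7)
The plan is to argue by contraposition: assume $F$ is not rational and derive a contradiction from the existence of $h$. This follows P\'olya's classical Hankel-determinant strategy, upgraded to exploit the two Laurent expansions of $F$ simultaneously and to use the two-sided Laurent polynomial $h$ as a multiplicative weight.

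I would first set up the arithmetic side. From the integer sequences $\{a_k\}_{k\ge 0}$ and $\{b_k\}_{k\ge 0}$ one forms a sequence of Hankel-type determinants $\Delta_N$ whose entries are drawn from these sequences and which vanish for all large $N$ if and only if $F$ is rational in $G$. In the purely one-sided P\'olya setting this is just the Kronecker criterion $\det(a_{i+j})_{0\le i,j\le N}=0$ eventually; here the two-sidedness is handled by forming block determinants that mix $a$ and $b$ entries, so that vanishing corresponds to a simultaneous linear recurrence for both series, equivalently rationality of $F$ throughout $G$. Under the assumption of non-rationality, infinitely many $\Delta_N$ are nonzero, and integrality forces $|\Delta_N|\ge 1$ for those $N$.

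The analytic half uses $h$ to produce a contradictory upper bound. For a large integer $M$, the Laurent polynomial $h(z)^M=\sum_{k=-lM}^{mM} C_{k,M}\,z^k$ has extreme coefficients $A_m^M$ and $A_{-l}^M$ of modulus $\ge 1$. Representing each matrix entry of $\Delta_N$ as a Cauchy-type contour integral of $F(z)$ times a monomial, inserting the factor $h(z)^M/h(z)^M$, and rebalancing rows and columns by appropriate powers of $A_m$ and $A_{-l}$, I would rewrite $\Delta_N$ (times explicit powers of $A_m$ and $A_{-l}$, whose moduli are bounded below by $1$) as a sum of determinants whose entries are Laurent coefficients of $h(z)^M F(z)$. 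Representing these coefficients by contour integrals along curves in $G$ pushed near $E$, and using $|h(z)|<1$ on $E$ together with the analyticity of $F$ in $G$, the entries decay geometrically like $\rho^M$ for some $\rho<1$. A Hadamard-type estimate then produces $|\Delta_N|\le c(N)\,\rho^{M}$, which for any fixed $N$ tends to $0$ as $M\to\infty$, contradicting $|\Delta_N|\ge 1$.

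The step I expect to be the main obstacle is the precise row and column rebalancing that converts $\Delta_N$ into a determinant of Laurent coefficients of $h^M F$, so that the compensating factors extracted from the determinant are exactly powers of $A_m$ and $A_{-l}$. This is where the hypotheses $|A_m|\ge 1$ and $|A_{-l}|\ge 1$ are consumed: they ensure that dividing by these normalizing factors cannot help, so the geometric decay of the remaining contour integrals wins. The two-sided nature of $h$, with nonzero leading coefficients at both ends of its Laurent range, is exactly what enables a simultaneous treatment of both expansions of $F$, a feature that is invisible in the one-sided P\'olya theorem and that ultimately allows the smaller set $E$ (rather than the full cut set $K$ used by Dimitrov) to govern the rationality criterion.
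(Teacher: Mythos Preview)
The paper does not prove this statement: Theorem~\ref{Rob} is quoted from Robinson~\cite[p.~533]{Ro} and used as a black box, so there is no argument in the paper against which to compare your proposal.

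That said, your sketch is broadly in the spirit of Robinson's own proof (Hankel-type determinants with integer entries, Kronecker's rationality criterion, and an analytic upper bound obtained by weighting with powers of $h$). One structural point in your outline is off, however. You propose to fix $N$ and let an independent parameter $M\to\infty$, obtaining $|\Delta_N|\le c(N)\rho^M\to 0$. But inserting $h^M/h^M$ into a contour-integral representation of a fixed matrix entry changes nothing; the P\'olya--Robinson mechanism is that multiplying rows (or columns) by powers of $h$ and then performing elementary operations \emph{shifts the indices} of the Laurent coefficients appearing as entries, so the power of $h$ employed is tied to the row index and hence to the size $N$ of the determinant, not to a free parameter. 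The upper bound one actually obtains has the shape $|\Delta_N|\le C^{N}\bigl(\sup_E|h|\bigr)^{cN^2}$ as $N\to\infty$, to be compared with the arithmetic lower bound $|\Delta_N|\ge 1$ along the subsequence of nonzero determinants. The hypotheses $|A_m|\ge 1$ and $|A_{-l}|\ge 1$ enter precisely because each such row/column operation divides the determinant by a power of an extreme coefficient of $h$, and these compensating factors must not overwhelm the gain from $\sup_E|h|<1$. Your identification of this rebalancing as the crux is correct; the fix is to couple $M$ to $N$ rather than treat them as independent.
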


It is clear from \eqref{fun} that rationality of $F$ implies rationality of $D$, hence implies in turn that $P$ is cyclotomic by Proposition \ref{Dimprop} as before.
Robinson's theorem essentially replaces the transfinite diameter of $E$ used in P\'olya's theorem, which is equal to the Chebyshev constant of $E$ defined via monic
polynomials with the least supremum norms on $E$ (cf. \cite{Ra} and \cite{Ts}), by a smaller quantity defined via the supremum norms
of  Laurent-type rational functions $h(z).$ We may assume that $h(z)$ has equal number of positive and negative powers due to symmetry of our
set $E$ defined in \eqref{E} with respect to $\T,$ and write
\[
|h(z)| = \left|\sum_{k=-m}^m A_k z^k\right| = |z|^{-m} \left|\sum_{k=0}^{2m} A_{k-m} z^k\right| = w(z)^{2m} |Q_{2m}(z)|.
\]
Here, we introduce the weight function $w(z):=|z|^{-1/2}$ and consider weighted polynomials of the form $w^{2m} Q_{2m}$, where $\deg(Q_{2m}) = {2m}.$
Such weighted polynomials were studied in detail by the methods of potential theory in \cite{ST}, see Chapter III in particular. The relevant quantity we need
in this paper is the weighted Chebyshev constant of $E$ \cite[p. 163]{ST} defined by
\begin{align}\label{wCheb}
t_w := \lim_{m\to\infty} \left( \inf\{ \|w^m Q_m\|_E : Q_m\in\C[z] \mbox{ is monic, } \deg Q_m = m\} \right)^{1/m},
\end{align}
where $\|\cdot\|_E$ denotes the standard supremum norm on $E$, see Chapter III of \cite{ST} for a complete exposition. Note that if $w\equiv 1$ on $E$, then
$t_w$ reduces to the regular Chebyshev constant of $E$ that is equal to the transfinite diameter or capacity of $E$. In the context of our weight
$w(z):=|z|^{-1/2},\ z\in E$, the main application to our problems is that $t_w<1$ implies existence of weighted polynomials $w^m Q_m$ with geometrically
small supremum norms on $E$, hence existence of rational functions $h(z)$ of the form required in Theorem \ref{Rob}. However, completing the proof of our
main result in Theorem \ref{Bconj} via this approach needs a detailed and somewhat technical study by using weighted potential theory (or potential theory
with external fields), which is carried out in the next section.

\section{Technical ingredients} \label{Potential}

This section contains various auxiliary statements necessary to justify all steps of our argument, and complete a proof of Theorem \ref{Bconj}.
The first lemma provides more details on $F$ and its Laurent series used in Theorem \ref{Rob}.

\begin{lemma} \label{expansions}
Let $F$ be as defined in \eqref{fun}. Then $F$ is analytic in $G:=\C\setminus E,$ and satisfies $F(z) = z^n F(1/z),\ z\in G.$
Moreover, $F(z)$ has Laurent expansions with integer coefficients of the form:
\[
F(z)=\sum_{k=0}^\infty b_k z^k \ \mbox{near $0$} \quad \mbox{and} \quad F(z)=\sum_{k=0}^\infty b_k z^{n-k} \ \mbox{near $\infty$}.
\]
\end{lemma}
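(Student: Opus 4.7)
The plan is to establish the four assertions in sequence: analyticity of $F$ on $G$, the functional equation $F(z) = z^n F(1/z)$, and the two Laurent expansions with integer coefficients. Analyticity is immediate from the construction: each factor $\sqrt{(z-\alpha_k^2)(z-\alpha_{n-k+1}^2)}$ is by definition holomorphic on the complement of the corresponding segment, and the same holds for the fourth-power factors, so the product is holomorphic on $\C \setminus E$.

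Next I would observe the algebraic identity
\[
F(z)^2 = \prod_{k=1}^{n/2}(z-\alpha_k^2)(z-\alpha_{n-k+1}^2) \cdot \prod_{k=1}^{n/2}(z-\alpha_k^4)(z-\alpha_{n-k+1}^4) = P_2(z)\,P_4(z),
\]
since the pairs $\{\alpha_k^2,\alpha_{n-k+1}^2\}$ partition the multiset of all $\alpha_j^2$, and similarly for the fourth powers. Because $P$ is reciprocal with $P(0)=1$, we have $\prod_j \alpha_j = (-1)^n = 1$, and the multisets $\{\alpha_j^2\}$ and $\{\alpha_j^4\}$ are each closed under $z \mapsto 1/z$; a short computation then yields that $P_2$ and $P_4$ are themselves reciprocal of degree $n$, so
\[
F(z)^2 = P_2(z)\,P_4(z) = z^{2n}\,P_2(1/z)\,P_4(1/z) = \bigl(z^n F(1/z)\bigr)^2.
\]
Thus $F(z) = \epsilon\,z^n F(1/z)$ on the connected domain $\C \setminus (E \cup 1/E)$ for some constant $\epsilon \in \{\pm 1\}$, extending by analytic continuation to $G$ wherever both sides are defined.

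Pinning down $\epsilon = +1$ is the main technical step. Comparing leading terms at $\infty$ (where $F(z)\sim z^n$ while $z^n F(1/z) \sim F(0)\,z^n$) shows that $\epsilon = 1/F(0)$, so it suffices to verify $F(0) = +1$. I plan to do this by case analysis on the type of Galois orbit. For a real reciprocal pair $\{\alpha,1/\alpha\}$, both $\alpha^2$ and $\alpha^4$ are positive, so the associated segments lie on the positive real axis; the single-valued branch, being asymptotic to $z$ at $\infty$, is real and strictly negative on $(-\infty,\alpha^{-2})$ (by its behavior as $z\to -\infty$), so each factor evaluates to $-1$ at $z=0$, giving a pair-contribution of $(-1)(-1)=+1$. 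For a non-real quadruple $\{\alpha,\bar\alpha,1/\alpha,1/\bar\alpha\}$, the two associated reciprocal pairs produce segments that are complex conjugates of each other; the branch choice is conjugate-symmetric, so the two factor values at $z=0$ are complex conjugates lying on the unit circle, and their product is $|\cdot|^2 = +1$. Multiplying over all pairs (and over both the $\alpha^2$ and $\alpha^4$ factors) yields $F(0)=1$, hence $\epsilon=1$.

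The two Laurent expansions then follow quickly. Near $0$, both $F$ and the function $D$ of Proposition \ref{Dimprop} are analytic with $F^2 = D^2 = P_2P_4$ and $F(0) = D(0) = 1$, so $F \equiv D$ on a neighborhood of the origin, and Proposition \ref{Dimprop} supplies the integer Maclaurin coefficients $b_k$. Near $\infty$, the functional equation gives
\[
F(z) = z^n F(1/z) = z^n \sum_{k=0}^{\infty} b_k z^{-k} = \sum_{k=0}^{\infty} b_k z^{n-k},
\]
which is the required expansion with the same integers $b_k$. The chief obstacle is the sign verification $F(0)=+1$; once that is in hand, everything else is a direct consequence of the algebraic identity $F^2=P_2P_4$ together with Proposition \ref{Dimprop}.
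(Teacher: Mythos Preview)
Your proof is correct and follows the same overall structure as the paper's: analyticity from the definition, the functional equation $F(z)=z^nF(1/z)$, integer Maclaurin coefficients via Proposition~\ref{Dimprop}, and then the expansion at $\infty$ read off from the functional equation.

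The one place where you proceed differently is the derivation of the functional equation. The paper argues directly: writing $z^nF(1/z)$ as a single square root, it factors out $z^{-2n}$ and the products $\prod_k\alpha_k^2=\prod_k\alpha_k^4=1$, and then simply cites ``our choice of branch for the square root'' to absorb all sign issues in one stroke. You instead square to get $F(z)^2=(z^nF(1/z))^2$ and then pin down the sign by computing $F(0)=+1$ explicitly, via the case analysis on real reciprocal pairs (each radial factor contributes $-1$, two factors give $+1$) and complex-conjugate quadruples (conjugate factors multiply to $|\cdot|^2=+1$). Your route is a bit longer but makes the sign determination fully transparent, whereas the paper's computation is slicker but leaves the branch bookkeeping to the reader. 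Either way, once $F(0)=1$ is known, the identification $F\equiv D$ near $0$ and the rest of the argument are identical to the paper's. (Your aside about $E\cup 1/E$ is harmless but unnecessary: as the paper notes, $E$ itself is invariant under $z\mapsto 1/z$ since the segments are radial and come in conjugate pairs.)
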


\begin{proof}
The definition of $F$ and its analyticity in $\C\setminus E$ was discussed in the previous section. Recall that $P(z)=\prod_{k=1}^n (z-\alpha_k)$
is reciprocal, which implies that $P(0) = (-1)^n \prod_{k=1}^n \alpha_k = 1.$ Hence $\prod_{k=1}^n \alpha_k^2 = \prod_{k=1}^n \alpha_k^4 = 1.$
It also follows that the complete set of conjugates $\{\alpha_k\}_{k=1}^n$ is invariant under complex conjugation and inversion in $\T$ given by the
map $z\to1/\bar{z},$ so that this set is invariant under the map $z\to 1/z.$ This further entails similar symmetry properties of $E$ as defined in \eqref{E},
under our convention of symmetric pairing for $\alpha_k$ and $\alpha_{n-k+1}$ by setting $\alpha_k=1/\bar{\alpha}_{n-k+1}$, for $k=1,\ldots,n/2.$ Thus
if $z\in G$ then $1/z\in G$. For $z$ in a neighborhood of the origin, we obtain from the definition of $F$ that
\begin{align*}
z^n  F(1/z) &= z^n \sqrt{\prod_{k=1}^{n} (1/z-\alpha_k^2)(1/z-\alpha_k^4)} = z^n \sqrt{z^{-2n} \prod_{k=1}^{n} \alpha_k^2 \ \prod_{k=1}^{n} \alpha_k^4 \
\prod_{k=1}^{n} (\alpha_k^{-2}-z)(\alpha_k^{-4}-z)} \\
&= z^n z^{-n} \sqrt{\prod_{k=1}^{n} (z-\alpha_k^2)(z-\alpha_k^4)} = F(z),
\end{align*}
where we also used our choice of branch for the square root. Since both $z^n  F(1/z)$ and $F(z)$ are analytic in $G$ and coincide on an open set, they are identical
for all $z\in G.$ The fact that $F$ has Maclaurin expansion with integer coefficients near the origin is immediate from Proposition \ref{Dimprop}. Then the stated Laurent
expansion near infinity follows from the latter Maclaurin expansion by the formula $F(z) = z^n  F(1/z)$ for $z$ near infinity.
\end{proof}

As explained in the end of Section \ref{Outline}, we need to develop a detailed study of the weighted Chebyshev constant $t_w$ for $w(z):=|z|^{-1/2},\ z\in E$.
In the remaining part of this section, we obtain an explicit form of $t_w$ in terms of standard logarithmic potential theory, which allows to find a sharp estimate
expressed through the house of $\alpha.$ We use many facts and ideas from potential theory in the complex plane below, and refer to \cite{Ra} and \cite{Ts} for
the classical version, as well as to \cite{ST} for the weighted version. For a positive unit Borel measure $\mu$ with compact support, define its logarithmic potential by
\[
U^{\mu}(z):=-\dis\int\log|z-t|\,d\mu(t).
\]
The weighted equilibrium measure $\mu_w$ is a unique probability measure supported on $E$ that expresses a steady state distribution of the unit charge in presence
of the external field $Q(z) = - \log w(z),$ where we assume here that $w$ is a positive continuous function defined on $E$. The equilibrium is described by the following
equations for the combined potential of $\mu_w$ and the external field $Q$:
\begin{equation} \label{3.1}
U^{\mu_{w}}(z) + Q(z) \geq F_{w}, \qquad z \in E,
\end{equation}
and
\begin{equation} \label{3.2}
U^{\mu_{w}}(z) + Q(z) = F_{w}, \qquad z \in {\rm supp}\, \mu_{w},
\end{equation}
where $F_{w}$ is a constant (see Theorems I.1.3 and I.5.1 in \cite{ST}). These equations are of importance for us because
the weighted Chebyshev constant is given by
\begin{equation} \label{twFw}
t_w = e^{-F_w}.
\end{equation}
Note that for $w\equiv 1$ on $E$ we have $Q\equiv 0$, so that $\mu_w$ reduces to the classical (not weighted) equilibrium measure $\mu_E$,
and $F_w$ reduces to the classical Robin's constant $V_E$ for $E$. Since logarithmic capacity (and the transfinite diameter) of $E$ is given by
cap$(E)=e^{-V_E},$ the connection of $t_w$ with these classical notions is apparent.

\begin{lemma} \label{twlem}
Let $E\subset\C$ be a compact set with no interior such that $G=\overline{\C}\setminus E$ is connected and $0\in G.$
If $w(z):=|z|^{-1/2},\ z\in E,$ then
\begin{equation} \label{tw}
t_w = e^{-g_G(0,\infty)/2} \sqrt{\textup{cap}(E)}.
\end{equation}
where $g_G(t,\infty)$ is the Green function of $G$ with logarithmic pole at $\infty.$
\end{lemma}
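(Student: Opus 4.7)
The plan is to apply formula \eqref{twFw}, which reduces the problem to computing the weighted equilibrium constant $F_w$ for the external field $Q(z) = -\log w(z) = \tfrac{1}{2}\log|z|$ on $E$. The target identity \eqref{tw} is equivalent to the assertion
\[
F_w = \tfrac{1}{2}\bigl(V_E + g_G(0,\infty)\bigr),
\]
where $V_E = -\log\mathrm{cap}(E)$ is Robin's constant. Note that $Q$ is continuous on $E$ because $0\in G$ forces $0\notin E$, so the weighted equilibrium pair $(\mu_w, F_w)$ is well-defined by standard theory.

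The main idea is to exhibit $\mu_w$ explicitly as a convex combination of two classical measures and then invoke uniqueness. The two ingredients are the unweighted equilibrium measure $\mu_E$ of $E$, satisfying $U^{\mu_E}(z) = V_E$ quasi-everywhere on $E$, and the balayage $\hat{\delta}_0$ of the point mass $\delta_0$ onto $E$ from the unbounded domain $G$. The relevant balayage identity (Chapter~II of \cite{ST}) reads
\[
U^{\hat{\delta}_0}(z) = -\log|z| + g_G(0,\infty) \quad \text{q.e.\ on } E,
\]
where the additive constant equals $\int g_G(t,\infty)\,d\delta_0(t) = g_G(0,\infty)$ and arises precisely because $G$ is unbounded. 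Setting $\mu_w := \tfrac{1}{2}\mu_E + \tfrac{1}{2}\hat{\delta}_0$ produces a probability measure on $E$, and a direct substitution yields, quasi-everywhere on $E$,
\[
U^{\mu_w}(z) + \tfrac{1}{2}\log|z| = \tfrac{1}{2}V_E + \tfrac{1}{2}\bigl(-\log|z| + g_G(0,\infty)\bigr) + \tfrac{1}{2}\log|z| = \tfrac{1}{2}\bigl(V_E + g_G(0,\infty)\bigr).
\]
This constant value verifies both equilibrium conditions \eqref{3.1} and \eqref{3.2}, so the uniqueness of the weighted equilibrium pair (Theorem~I.1.3 of \cite{ST}) identifies $\mu_w$ and pins down $F_w$; formula \eqref{twFw} together with $e^{-V_E} = \mathrm{cap}(E)$ then gives \eqref{tw}.

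The main technical obstacle is confirming the correct additive constant in the balayage identity, since this constant would be absent were $G$ bounded. If a direct citation is deemed insufficient, the identity can be verified by checking that
\[
U^{\hat{\delta}_0}(z) + \log|z| - g_G(0,\infty) + g_G(z,0)
\]
is a bounded harmonic function on $G$ that vanishes quasi-everywhere on $\partial G = E$ (using $g_G(\cdot,0)|_E = 0$ q.e.) and whose value at $\infty$ is $0$ (using $U^{\hat{\delta}_0}(z) = -\log|z| + O(1/|z|)$ and $g_G(z,0)\to g_G(0,\infty)$ as $z\to\infty$); the maximum principle then forces it to vanish identically, yielding the stated formula on all of $\mathbb{C}$.
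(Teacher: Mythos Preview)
Your main argument is correct and is essentially identical to the paper's: both propose the candidate measure $\mu_w=\tfrac{1}{2}\mu_E+\tfrac{1}{2}\hat{\delta}_0$ (the paper writes this as $\tfrac{1}{2}(\omega_G(\infty,\cdot)+\omega_G(0,\cdot))$), cite the balayage identity from Chapter~II of \cite{ST} to evaluate $U^{\hat{\delta}_0}+\log|\cdot|$ on $E$, combine with Frostman's theorem, and invoke uniqueness of the weighted equilibrium pair to read off $F_w=\tfrac{1}{2}(V_E+g_G(0,\infty))$.

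One caution about your final paragraph: the fallback verification is circular as written. You claim the auxiliary function $U^{\hat{\delta}_0}(z)+\log|z|-g_G(0,\infty)+g_G(z,0)$ vanishes q.e.\ on $E$, citing only $g_G(\cdot,0)|_E=0$; but that leaves exactly the statement $U^{\hat{\delta}_0}(z)+\log|z|=g_G(0,\infty)$ q.e.\ on $E$, which is what you are trying to prove. A non-circular route is to use instead that balayage preserves potentials on the complement of the sweeping domain up to an additive constant, and then compute that constant by evaluating at $\infty$ (or equivalently, invoke the Riesz decomposition of $g_G(\cdot,0)$ directly). Since the paper, like your main argument, simply cites the balayage formula from \cite{ST}, this optional paragraph is not needed.
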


\begin{proof}
Equations \eqref{3.1} and \eqref{3.2} characterize $\mu_w$ in the sense that if for a positive unit Borel measure $\mu$ supported on $E$ one has
\begin{equation} \label{3.5}
U^{\mu}(z) + \frac{1}{2} \log|z| \geq C, \qquad z \in E,
\end{equation}
and
\begin{equation} \label{3.6}
U^{\mu}(z) + \frac{1}{2} \log|z| = C, \qquad z \in S\subset E,
\end{equation}
where $C$ is a constant, then Theorem 3.3 of \cite[Ch. I]{ST}) implies that $\mu_w=\mu$ and $F_w=C.$ We show that the equilibrium measure is given by
\begin{equation}\label{3.7}
\mu := \frac{\omega_G(\infty,\cdot) + \omega_G(0,\cdot)}{2},
\end{equation}
where $\omega_G(\xi,\cdot)$ is the harmonic measure at $\xi\in G$, relative to  $G$. In fact, we verify that \eqref{3.6} holds for all $z\in E$.
The harmonic measure $\omega_G(0,\cdot)$ is the balayage of the point mass $\delta_0$ from the domain $G$ onto its boundary $\partial G=E$,
see Section II.4 of \cite{ST}. It follows from Theorem 4.4 of \cite[p.~115]{ST} that the potential of $\omega_G(0,\cdot)$ satisfies
\begin{align}\label{3.8}
U^{\omega_G(0,\cdot)}(z) + \log|z| &= U^{\omega_G(0,\cdot)}(z) -  U^{\delta_0}(z) = \int g_G(t,\infty)\,d\delta_0(t)= g_G(0,\infty), \quad z\in E.
\end{align}
We recall from Frostman's Theorem that the potential of $\omega_G(\infty,\cdot)=\mu_E$, which is the standard equilibrium measure of $E$,
is equal to Robin's constant on $E$:
\begin{align}  \label{3.9}
U^{\omega_G(\infty,\cdot)}(z)= V_E,\ z\in E,
\end{align}
see, e.g., \cite[p. 59]{Ra}. Combining \eqref{3.8} with \eqref{3.9}, we obtain that
\begin{align*}
U^{\mu}(z) + \frac{1}{2} \log|z| &= \frac{1}{2} \left(U^{\omega_G(0,\cdot)}(z) + \log|z|\right) + \frac{1}{2} U^{\omega_G(\infty,\cdot)}(z) \\
&= \frac{1}{2} g_G(0,\infty) + \frac{1}{2} V_E,\ z\in E,
\end{align*}
and so conclude that \eqref{3.6} is satisfied with the constant given by
\begin{equation} \label{Fw}
F_w = \frac{V_E + g_G(0,\infty)}{2}.
\end{equation}
Hence \eqref{tw} follows from \eqref{Fw} and \eqref{twFw}.
\end{proof}

In the subsequent analysis, we use the notation $t_w(S)$ for the weighted Chebyshev constant of a compact set $S$ with
respect to the weight $w(z):=|z|^{-1/2},\ z\in S.$ Our next goal is to find a convenient upper bound for $t_w$ via symmetrization.

\begin{lemma} \label{symlem}
In the settings of Theorem \ref{Bconj}, let $E$ be as defined in \eqref{E}, and let
\begin{equation} \label{E*}
E^*:=\bigcup_{1\le k\le n} \left( e^{2\pi k i/n} \left[\house{\alpha}^{-4},\house{\alpha}^4\right]\right).
\end{equation}
For the weight $w(z):=|z|^{-1/2},$ we have
\begin{equation} \label{twEE*}
t_w(E) \le t_w(E^*).
\end{equation}
\end{lemma}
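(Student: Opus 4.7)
The plan is to prove the inequality via two successive reductions: first I use monotonicity of $t_w$ with respect to set inclusion to enlarge $E$ to a configuration $\tilde E$ of $n$ radial segments of maximal length $[\house{\alpha}^{-4},\house{\alpha}^4]$, and then I invoke a Dubinin-type symmetrization to rotate those segments onto the equally spaced rays defining $E^*$.

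For the inclusion step, recall that the pairing $\alpha_{n-k+1}=1/\bar\alpha_k$ forces each segment $[\alpha_k^2,\alpha_{n-k+1}^2]$ or $[\alpha_k^4,\alpha_{n-k+1}^4]$ of $E$ to lie on a single ray from the origin, with endpoint moduli in $[\house{\alpha}^{-4},\house{\alpha}^4]$. Replacing each such segment by the full radial interval $[\house{\alpha}^{-4},\house{\alpha}^4]$ on its ray produces $\tilde E\supset E$. Since $\|w^m Q_m\|_E\le\|w^m Q_m\|_{\tilde E}$ for every monic polynomial $Q_m$, the definition \eqref{wCheb} yields $t_w(E)\le t_w(\tilde E)$. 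If fewer than $n$ of the arguments $2\arg(\alpha_k)$, $4\arg(\alpha_k)$ are distinct modulo $2\pi$, so that $\tilde E$ has fewer than $n$ radial segments, I further adjoin radial segments of the same length on auxiliary rays to reach a set with exactly $n$ radial segments, still retaining $t_w(E)\le t_w(\tilde E)$ by the same monotonicity.

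The substantive step is to prove $t_w(\tilde E)\le t_w(E^*)$: among configurations of $n$ equal-length radial segments positioned on a common annulus, the equally spaced one maximizes $t_w$. By Lemma \ref{twlem} this is equivalent to
\[
V_{\tilde E}+g_{\overline{\C}\setminus\tilde E}(0,\infty)\ge V_{E^*}+g_{\overline{\C}\setminus E^*}(0,\infty),
\]
where $V_S=-\log\textup{cap}(S)$. I would deduce this from Dubinin's symmetrization principle (Corollary 4.7 of \cite{Dub}): equal angular spacing of a family of congruent radial continua maximizes capacity, hence decreases $V$. Using the joint invariance of $\tilde E$ and $E^*$ under the inversion $z\mapsto 1/\bar z$, which swaps the roles of $0$ and $\infty$, the same principle should simultaneously minimize $g(0,\infty)$ at the equally spaced configuration. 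Summing the two monotonicities yields the required inequality.

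The main obstacle is that Dubinin's classical theorem, as applied in \cite{Di}, concerns radial continua emanating from the origin, whereas our segments sit strictly in the annulus $\house{\alpha}^{-4}\le|z|\le\house{\alpha}^4$. I would bridge this gap by prolonging each segment of $\tilde E$ and $E^*$ down to the origin, applying Dubinin's result to the resulting star-shaped configurations, and then quantifying the effect of the truncation at $|z|=\house{\alpha}^{-4}$ on both the capacity and the Green function, using the $z\mapsto 1/\bar z$ symmetry to transfer the comparison cleanly between the two ends of each segment.
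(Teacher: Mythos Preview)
Your two-step outline matches the paper's exactly: enlarge $E$ to a union $\tilde E$ of $n$ full radial segments in the annulus by monotonicity of $t_w$, then compare $\tilde E$ with the equally spaced configuration $E^*$. Via Lemma~\ref{twlem} you also correctly reduce the second step to the pair of inequalities
\[
\textup{cap}(\tilde E)\le\textup{cap}(E^*)\quad\text{and}\quad g_{\overline\C\setminus\tilde E}(0,\infty)\ge g_{\overline\C\setminus E^*}(0,\infty),
\]
and you obtain the first from Dubinin's symmetrization, just as the paper does.

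The gap is in the second inequality. Your plan---invoke the $z\mapsto 1/\bar z$ invariance so that ``the same principle should simultaneously minimize $g(0,\infty)$,'' and if not, prolong the segments to the origin, apply Dubinin, then undo the truncation---does not work as stated. The inversion symmetry only tells you that $g_{\overline\C\setminus S}(0,\infty)$ is intrinsically symmetric for each individual set $S$; it gives no comparison between $\tilde E$ and $E^*$. And once you prolong the segments to $0$, the point $0$ lies on the set and $g(0,\infty)$ collapses to $0$ for both configurations, so there is nothing left to ``truncate back'' in a controlled way. Dubinin's capacity statement is about a single pole at $\infty$; it does not by itself control the two-pole quantity $g(0,\infty)$ for annular slits.

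The paper supplies the missing ingredient by invoking a separate symmetrization theorem due to Solynin on harmonic measure of radial segments: for a disk $|z|<R$ slit along $\tilde E$ (respectively $E^*$), one has $\omega_{\tilde D_R}(0,\tilde E)\le\omega_{D^*_R}(0,E^*)$. The paper then builds the auxiliary harmonic functions $(1-\omega_{D_R}(z,S))(\log R+V_S)$, shows they converge to the Green functions as $R\to\infty$, and combines Solynin's inequality with $V_{\tilde E}\ge V_{E^*}$ to conclude $g_{\tilde G}(0,\infty)\ge g_{G^*}(0,\infty)$. That harmonic-measure input, not a manipulation of Dubinin's capacity result, is what you are missing.
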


\begin{proof}
It is clear from the definition \eqref{E} that $E$ is contained in the closed annulus $\{z\in\C:\house{\alpha}^{-4}\le|z|\le\house{\alpha}^4\}$.
The first step is to construct the set $\tilde{E}$ by extending all radial segments of $E$ so that they connect the circles $|z|=\house{\alpha}^{-4}$
and $|z|=\house{\alpha}^4$. Theorem 5.1.2 of \cite[p. 128]{Ra} gives that $\textup{cap}(E)\le\textup{cap}(\tilde{E})$ because $E\subset\tilde{E}.$
Also, Corollary 4.4.5 of \cite[p. 108]{Ra} gives that $g_G(0,\infty)\ge g_{\tilde{G}}(0,\infty)$ because $\tilde{G}:=\overline{\C}\setminus \tilde{E}
\subset G:=\overline{\C}\setminus E$. It follows from \eqref{tw} that
\begin{equation} \label{3.13}
t_w(E) \le t_w(\tilde{E}).
\end{equation}
Alternatively, one can observe the above inequality directly from the definition of $t_w$ in \eqref{wCheb}, as the supremum norms in that
definition increase with the set.

On the second step, we show that  $t_w$ increases if the radial segments of the set $\tilde{E}$ become equally spaced in angular sense, matching the property
of capacity (transfinite diameter) proved by Dubinin, see Corollary 4.7 of \cite[p. 118]{Dub} and the sketch of proof for Dimitrov's result in Section \ref{Outline}.
In fact, Dubinin's result states that
\begin{equation} \label{3.14}
\textup{cap}(\tilde{E})\le\textup{cap}(E^*).
\end{equation}
In view of \eqref{tw}, it remains to prove that  $g_{\tilde{G}}(0,\infty)\ge g_{G^*}(0,\infty)$, where $G^*:=\overline{\C}\setminus E^*$. We deduce this
fact from Theorem A of Solynin \cite[p. 1702]{So} that states a corresponding result for harmonic measures:
\begin{equation} \label{3.15}
\omega_{\tilde{D}_R}(0,\tilde{E}) \le \omega_{D^*_R}(0,E^*),
\end{equation}
where $\omega_{\tilde{D}_R}(0,\tilde{E})$ is the harmonic measure of $\tilde{E}$ at $0$, relative to $\tilde{D}_R:=\{z\in\C:|z| < R\}\setminus\tilde{E}$,
and $\omega_{D^*_R}(0,E^*)$ is the harmonic measure of $E^*$ at $0$, relative to $D^*_R:=\{z\in\C:|z| < R\}\setminus E^*$, for sufficiently large $R$.
We note that the result of Solynin in \cite{So} is stated for the unit disk, i.e., for $R=1,$ but \eqref{3.15} immediately follows by using the scaling map $z\to Rz.$
Recall that $\omega_{\tilde{D}_R}(z,\tilde{E})$ is a harmonic function of $z$ in $\tilde{D}_R,$ which is continuous on the closure of this domain, and has boundary
values $\omega_{\tilde{D}_R}(z,\tilde{E})=1,\ z\in\tilde{E},$ and $\omega_{\tilde{D}_R}(z,\tilde{E})=0,\ |z|=R.$ Hence the function
\begin{equation} \label{3.16}
\tilde{u}_R(z) := (1-\omega_{\tilde{D}_R}(z,\tilde{E}))(\log{R}+V_{\tilde{E}})
\end{equation}
is harmonic in $\tilde{D}_R,$ continuous on the closure of this domain, and has boundary values $\tilde{u}_R(z)=0,\ z\in\tilde{E},$ and
$\tilde{u}_R(z)=\log{R}+V_{\tilde{E}},\ |z|=R,$ where $V_{\tilde{E}}$ denotes the Robin's constant of $\tilde{E}.$ Since the Green function $g_{\tilde{G}}(z,\infty)$
has similar properties of being harmonic in $\tilde{G}\setminus\{\infty\}$, with boundary value $0$ on $\tilde{E}$ and the asymptotic
$g_{\tilde{G}}(z,\infty)=\log{R}+V_{\tilde{E}}+o(1)$ as $|z|=R\to\infty,$ we obtain from the Maximum-Minimum Principle that
\[
|\tilde{u}_R(z) - g_{\tilde{G}}(z,\infty)| \le o(1), \quad z\in \tilde{D}_R,\ R\to\infty.
\]
Setting $R=N\in\N$, we produce a sequence of harmonic functions $\tilde{u}_N(z)$ that converges to $g_{\tilde{G}}(z,\infty)$ uniformly on compact subsets of $\C$
as $N\to\infty.$ The same construction yields the sequence of harmonic functions
\begin{equation*}
u^*_N(z) := (1-\omega_{D^*_R}(z,E^*))(\log{N}+V_{E^*})
\end{equation*}
that converges to $g_{G^*}(z,\infty)$ uniformly on compact subsets of $\C$. Applying \eqref{3.15} and the equivalent of \eqref{3.14} written as
$V_{\tilde{E}} \ge V_{E^*}$, we conclude that
\[
\tilde{u}_N(0) \ge u^*_N(0) \quad \mbox{for large $N\in\N$.}
\]
Passing to the limit as $N\to\infty$, we arrive at $g_{\tilde{G}}(0,\infty)\ge g_{G^*}(0,\infty),$ which implies that
\[
t_w(\tilde{E}) \le t_w(E^*)
\]
by \eqref{3.14} and \eqref{tw}. Thus \eqref{twEE*} follows from the latter inequality and \eqref{3.13}.
\end{proof}

We are now ready to find an explicit bound for $t_w$ expressed via the house of $\alpha.$
\begin{lemma} \label{seglem}
In the settings of Theorem \ref{Bconj}, let $E^*$ be as defined in \eqref{E*} and let $I:=\left[\house{\alpha}^{-4n},\house{\alpha}^{4n}\right]\subset\R$.
For the weight $w(z):=|z|^{-1/2},$ we have
\begin{equation} \label{twE*I}
t_w(E^*) \le \left(t_w\left(I\right)\right)^{1/n},
\end{equation}
where
\begin{equation} \label{twseg}
t_w\left(I\right) = \frac{\house{\alpha}^{2n}-\house{\alpha}^{-2n}}{2}.
\end{equation}
\end{lemma}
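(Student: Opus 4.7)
The plan is to handle the two assertions of the lemma separately: first the subordination inequality \eqref{twE*I}, which I will obtain by a direct change of variables $z \mapsto z^n$; then the explicit evaluation \eqref{twseg}, which I will compute from Lemma \ref{twlem} together with classical formulas for the capacity and Green function of a real segment.

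For \eqref{twE*I}, the key observation is that the polynomial map $\varphi(z) = z^n$ sends $E^*$ onto $I$, and that our weight transforms as $w(z)^n = |z|^{-n/2} = |z^n|^{-1/2} = w(z^n)$. Given any monic $Q_m \in \C[z]$ of degree $m$, the composition $Q_m \circ \varphi$ is a monic polynomial of degree $mn$, and
\begin{equation*}
\|w^{mn}\, Q_m\!\circ\!\varphi\|_{E^*} \;=\; \sup_{z\in E^*} w(z^n)^m |Q_m(z^n)| \;=\; \sup_{\zeta\in I} w(\zeta)^m |Q_m(\zeta)| \;=\; \|w^m Q_m\|_{I}.
\end{equation*}
Restricting the infimum in the definition \eqref{wCheb} of $t_w(E^*)$ at level $mn$ to polynomials of the form $Q_m\!\circ\!\varphi$ and then taking the $(mn)$-th root gives
\begin{equation*}
\bigl(\inf_{P_{mn}}\|w^{mn} P_{mn}\|_{E^*}\bigr)^{1/(mn)} \;\le\; \bigl(\inf_{Q_m}\|w^m Q_m\|_I\bigr)^{1/(mn)},
\end{equation*}
and letting $m\to\infty$ yields \eqref{twE*I}.

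For \eqref{twseg}, I apply Lemma \ref{twlem} to $E=I$, which is a compact set with no interior whose complement $G$ is connected and contains $0$ (since $\house{\alpha}>1$). Set $b := \house{\alpha}^{4n}$, so $I=[b^{-1},b]$. The capacity of a segment is a quarter of its length, giving $\textup{cap}(I) = (b-b^{-1})/4$. For the Green function, I will use the standard Joukowski parametrization $z = \tfrac{b+b^{-1}}{2} + \tfrac{b-b^{-1}}{4}(\zeta+1/\zeta)$, which conformally maps $\{|\zeta|>1\}$ onto $G$, so that $g_G(z,\infty)=\log|\zeta|$. Solving the resulting quadratic at $z=0$ and selecting the root with $|\zeta|>1$ gives $\zeta=-(b+1)/(b-1)$, hence
\begin{equation*}
g_G(0,\infty) \;=\; \log\frac{b+1}{b-1}.
\end{equation*}
Plugging into \eqref{tw}:
\begin{equation*}
t_w(I) \;=\; \sqrt{\frac{b-1}{b+1}} \cdot \sqrt{\frac{(b-1)(b+1)}{4b}} \;=\; \frac{b-1}{2\sqrt{b}} \;=\; \frac{\sqrt{b}-1/\sqrt{b}}{2} \;=\; \frac{\house{\alpha}^{2n}-\house{\alpha}^{-2n}}{2},
\end{equation*}
which is \eqref{twseg}.

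The only genuinely nontrivial step is the Green function calculation; everything else is a routine combination of the substitution trick with the invariance property $w\circ\varphi = w^n$ and Lemma \ref{twlem}. I expect that obstacle to be mild, since the Joukowski map for an interval is classical and the evaluation at $z=0$ reduces to factoring $-(b^2+1)\pm 2b = -(b\mp 1)^2$, which cleanly collapses to the stated closed form.
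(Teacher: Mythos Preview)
Your proof is correct and follows essentially the same approach as the paper: the substitution $z\mapsto z^n$ together with $w(z)^n=w(z^n)$ for \eqref{twE*I}, and Lemma \ref{twlem} combined with the Joukowski map and the quarter-length capacity formula for \eqref{twseg}. The only cosmetic difference is that the paper writes the conformal map $\Phi:\Omega\to\{|w|>1\}$ explicitly and evaluates $1/|\Phi(0)|$, whereas you use the inverse Joukowski parametrization and solve a quadratic; both computations give $g_G(0,\infty)=\log\dfrac{b+1}{b-1}$ and collapse to the same closed form.
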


\begin{proof}
We use the definition of $t_w(I)$ given in \eqref{wCheb}, first replacing $E$ with $I$ in that equation. Let $Q_m$ be a sequence of monic
polynomials that realize the $\inf$ in \eqref{wCheb} for $t_w(I)$. For $z\in E^*$, we have that $t=z^n\in I$ and
\begin{align*}
\left(t_w\left(I\right)\right)^{1/n} = \lim_{m\to\infty} \left\| |t|^{-m/2} Q_m(t) \right\|_I^\frac{1}{mn} \ge
\liminf_{m\to\infty} \left\| |z|^{-mn/2} Q_m(z^n) \right\|_{E^*}^\frac{1}{mn} \ge t_w(E^*).
\end{align*}
In fact, equality holds in \eqref{twE*I}, but the stated inequality is sufficient for our purpose. We now compute $t_w\left(I\right)$ from \eqref{tw}.
Corollary 5.2.4 of \cite[p. 134]{Ra} immediately gives that
\begin{equation} \label{3.19}
\textup{cap}\left(I\right) = \left(\house{\alpha}^{4n}-\house{\alpha}^{-4n}\right)/4.
\end{equation}
We now need to find the value of $g_\Omega(0,\infty)$, where $\Omega:=\C\setminus I.$ This is conveniently available from the well known connection
with the conformal mapping $\Phi:\Omega\to\{w:|w|>1\}$ normalized by $\Phi(\infty)=\infty:$
\begin{equation*}
g_\Omega(z,\infty) = \log|\Phi(z)|, \quad z\in\Omega,
\end{equation*}
see, e.g., the proof of Theorem 4.4.1 in \cite[p. 113]{Ra}. If $I=[a,b]\subset\R$ then $\Phi$ is given explicitly by the (shifted and scaled) inverse of
the Joukowski conformal mapping:
\begin{equation*}
\Phi_{\infty} (z) := \frac{2z -a -b +2 \sqrt{(z-a)(z-b)}}{b-a}, \quad z\in\Omega.
\end{equation*}
Applying this with $a=\house{\alpha}^{-4n}$ and $b=\house{\alpha}^{4n}$, we obtain that
\[
e^{-g_\Omega(0,\infty)} = 1/|\Phi(0)| = \frac{\house{\alpha}^{4n}-\house{\alpha}^{-4n}}{\house{\alpha}^{4n}+\house{\alpha}^{-4n}+2}.
\]
The latter formula together with \eqref{3.19} and \eqref{tw} yield
\begin{align*}
t_w(I) = \frac{\house{\alpha}^{4n}-\house{\alpha}^{-4n}}{2(\house{\alpha}^{2n}+\house{\alpha}^{-2n})} =  \frac{\house{\alpha}^{2n}-\house{\alpha}^{-2n}}{2}.
\end{align*}
\end{proof}

Our last lemma gives information about constant terms of polynomials with asymptotically minimal weighted norms,
which is necessary for the application of Theorem \ref{Rob}.

\begin{lemma} \label{constlem}
Let $E$ be as defined in \eqref{E}, and let $w(z)=|z|^{-1/2},\ z\in E$. Then there is a sequence of monic polynomials
$Q_m,\ \deg(Q_m)=m,$ that satisfies
\begin{align} \label{wnorm}
t_w(E) = \lim_{m\to\infty} \||z|^{-m/2} Q_m(z)\|_E^{1/m}
\end{align}
and
\begin{equation} \label{const}
\lim_{m\to\infty} |Q_m(0)|^{1/m} = 1.
\end{equation}
\end{lemma}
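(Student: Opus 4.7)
The plan is to take $Q_m$ to be the weighted Fekete polynomial of degree $m$, whose zeros $\{\zeta_{m,k}\}_{k=1}^{m}\subset E$ maximize the weighted Vandermonde $\prod_{i<j}|\zeta_i-\zeta_j|\prod_i w(\zeta_i)^{m-1}$. By the classical theory (see Chapter III.1 of \cite{ST}), $Q_m$ is monic, satisfies $\|w^m Q_m\|_E^{1/m}\to t_w(E)$ (which is \eqref{wnorm}), and its normalized zero-counting measure $\nu_m := \frac{1}{m}\sum_{k}\delta_{\zeta_{m,k}}$ converges weakly to the weighted equilibrium measure $\mu_w$. Since $E$ is compact and $0\notin E$, $\log|t|$ is continuous on $E$, and weak convergence yields
\[
\tfrac{1}{m}\log|Q_m(0)| \,=\, \int \log|t|\,d\nu_m(t) \,\longrightarrow\, \int \log|t|\,d\mu_w(t) \,=\, -U^{\mu_w}(0).
\]
It therefore suffices to show $U^{\mu_w}(0) = 0$.

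Using the identification $\mu_w = \tfrac{1}{2}(\omega_G(\infty,\cdot)+\omega_G(0,\cdot))$ from the proof of Lemma \ref{twlem}, I compute each potential at $0$. Frostman's theorem gives $U^{\omega_G(\infty,\cdot)}(0) = V_E - g_G(0,\infty)$. For the second potential I extend \eqref{3.8} from $E$ to $G$ by considering
\[
\phi(z) \,:=\, U^{\omega_G(0,\cdot)}(z) + \log|z| - g_G(0,\infty) + g_G(z,0);
\]
the logarithmic singularities at $z=0$ cancel, $\phi$ is bounded at $\infty$ because $U^{\omega_G(0,\cdot)}(z) = -\log|z| + o(1)$ while $g_G(z,0)$ stays bounded there, it is harmonic in $G$, and vanishes on $E$ by \eqref{3.8}. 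The Maximum--Minimum Principle forces $\phi\equiv 0$; taking $z\to 0$ with $g_G(z,0)+\log|z|\to V_{E,0}$ (the Robin constant of $E$ with pole at $0$) yields $U^{\omega_G(0,\cdot)}(0) = g_G(0,\infty) - V_{E,0}$. Summing,
\[
U^{\mu_w}(0) = \tfrac{1}{2}(V_E - V_{E,0}).
\]

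The decisive step is the equality $V_E = V_{E,0}$. The reciprocal pairing $\alpha_k = 1/\bar\alpha_{n-k+1}$ forces $\alpha_{n-k+1}^2 = 1/\overline{\alpha_k^2}$; writing $\alpha_k^2 = re^{i\theta}$ gives $\alpha_{n-k+1}^2 = (1/r)e^{i\theta}$, so both endpoints of each segment in \eqref{E} share the same argument, making every segment of $E$ a \emph{radial} segment. The map $z\mapsto 1/z$ sends a radial segment $[re^{i\theta},(1/r)e^{i\theta}]$ to $[(1/r)e^{-i\theta},re^{-i\theta}]$, which is another segment of $E$ because the conjugate set $\{\alpha_k\}$ is closed under complex conjugation; likewise for fourth powers. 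Hence $G$ is invariant under $z\mapsto 1/z$, so conformal invariance of the Green's function gives $g_G(z,0) = g_G(1/z,\infty)$. Setting $w=1/z$ and letting $z\to 0$,
\[
V_{E,0} = \lim_{z\to 0}\bigl(g_G(z,0)+\log|z|\bigr) = \lim_{w\to\infty}\bigl(g_G(w,\infty)-\log|w|\bigr) = V_E.
\]
Thus $U^{\mu_w}(0)=0$ and $|Q_m(0)|^{1/m}\to 1$, as required. The main technical point is the construction of $\phi$, which must bypass the naive harmonic-measure representation of $V_{E,0}$ (invalid because $g_G(\cdot,0)+\log|\cdot|$ is unbounded at $\infty$); the decisive conceptual point is the observation that the segments of $E$ are radial, which immediately provides the $z\mapsto 1/z$ symmetry needed to equate $V_E$ and $V_{E,0}$.
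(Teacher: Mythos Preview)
Your proof is correct and follows essentially the same route as the paper: choose the weighted Fekete polynomials, reduce \eqref{const} to $U^{\mu_w}(0)=0$, and verify this via the identification $\mu_w=\tfrac12(\omega_G(\infty,\cdot)+\omega_G(0,\cdot))$ together with the $z\mapsto 1/z$ invariance of $G$. The only differences are cosmetic: where the paper cites Theorem~1.8 of \cite{ST} for $|Q_m(0)|^{1/m}\to e^{-U^{\mu_w}(0)}$ you argue via weak convergence of the Fekete point-counting measures (valid since $0\notin E$ makes $\log|t|$ continuous on $E$), and where the paper cites Theorem~5.1 of \cite{ST} for the balayage potential you reprove the identity $U^{\omega_G(0,\cdot)}(z)=g_G(0,\infty)-g_G(z,0)-\log|z|$ by hand via your auxiliary function $\phi$ and the Maximum--Minimum Principle.
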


\begin{proof}
The required sequence of polynomials is selected as the weighted Fekete polynomials introduced and studied in Section III.1
of \cite{ST}. In particular, Theorem 1.9 of \cite[p. 150]{ST} states that the weighted Fekete polynomials $Q_m$ associated with the weight
$w(z)=|z|^{-1/2},\ z\in E,$ satisfy \eqref{wnorm}, and Theorem 1.8 of \cite[p. 150]{ST} states that
\begin{equation*}
\lim_{m\to\infty} |Q_m(z)|^{1/m} = \exp\left(-U^{\mu_{w}}(z)\right),\quad z\in G,
\end{equation*}
where $\mu_w$ is the weighted equilibrium measure found in the proof of Lemma \ref{twlem}, and explicitly given in \eqref{3.7}.
Hence \eqref{const} is equivalent to $U^{\mu_{w}}(0)=0,$ which further translates into
\begin{align}\label{3.21}
U^{\omega_G(\infty,\cdot)}(0) + U^{\omega_G(0,\cdot)}(0) = 0.
\end{align}
For the classical equilibrium (conductor) potential $U^{\omega_G(\infty,\cdot)}$, we have
\begin{align} \label{3.22}
U^{\omega_G(\infty,\cdot)}(z) = V_E - g_G(z,\infty),\ z\in G,
\end{align}
by Theorem III.37 in \cite[p.~82]{Ts}. We also use that $\omega_G(0,\cdot)$ is the balayage of $\delta_0$ from $G$, as in the proof of Lemma \ref{twlem}.
It follows from Theorem 5.1 of \cite[p. 124]{ST} that
\begin{align*}
U^{\delta_0}(z) - \int g_G(z,t)\,d\delta_0(t) = U^{\omega_G(0,\cdot)}(z) - \int g_G(t,\infty)\,d\delta_0(t), \quad z\in G,
\end{align*}
consequently,
\begin{align} \label{3.23}
U^{\omega_G(0,\cdot)}(z) = g_G(0,\infty) - g_G(z,0) - \log|z|, \quad z\in G.
\end{align}
Since $G$ is invariant under the transformation $w=1/z$, we also have the connection \cite[p. 108]{Ra}
\[
g_G(z,0) = g_G(1/z,\infty),\quad z\in G.
\]
Combining \eqref{3.22} and \eqref{3.23} with the latter fact, we verify \eqref{3.21} as follows:
\begin{align*}
U^{\omega_G(\infty,\cdot)}(0) + U^{\omega_G(0,\cdot)}(0) &= V_E - g_G(0,\infty) + g_G(0,\infty) - \lim_{z\to 0} \left(g_G(z,0) + \log|z|\right) \\
&= V_E - \lim_{w\to \infty} \left(g_G(w,\infty) - \log|w|\right) = V_E - V_E = 0,
\end{align*}
where we used \eqref{3.22} and Theorem 3.1.2 of \cite[p. 53]{Ra} to compute the last limit.
\end{proof}

\section{Proof of the main result} \label{Proof}

\begin{proof}[Proof of Theorem \ref{Bconj}]
The lower bound for the house of $\alpha$ in \eqref{c-conj} follows from the inequality
\begin{equation} \label{twineq}
\frac{\house{\alpha}^{2n}-\house{\alpha}^{-2n}}{2} \ge 1,
\end{equation}
which turns into the quadratic inequality $x^2-2x-1\ge 0$ after the substitution $x=\house{\alpha}^{2n}.$ We show that \eqref{twineq} holds
by contradiction, following the argument sketched in Section \ref{Outline}. Indeed, if we assume that the opposite of \eqref{twineq}
holds, then Lemmas \ref{symlem} and \ref{seglem} give that
\begin{equation*}
t_w(E) \le t_w(E^*) \le  \left(t_w\left(I\right)\right)^{1/n} = \left( \frac{\house{\alpha}^{2n}-\house{\alpha}^{-2n}}{2} \right)^{1/n} < 1.
\end{equation*}
Applying Lemma \ref{constlem}, we find a sequence of monic polynomials $Q_m,\ \deg(Q_m)=m,$ that satisfies
\begin{align*}
\lim_{m\to\infty} \||z|^{-m/2} Q_m(z)\|_E^{1/m} < 1
\end{align*}
and \eqref{const}. For $m=2k,\ k\in\N,$ the weighted polynomials $z^{-m/2} Q_m(z)$ take the following form:
\[
h_k(z) := z^{-k} Q_{2k}(z) = A_{-k,k} z^{-k} + \sum_{j=-k+1}^{k-1} A_{j,k} z^j + z^k,
\]
with $|h_k(z)|<1,\ z\in E,$ for all large $k\in\N.$ If $|A_{-k,k}|\ge 1$ for some sufficiently large $k$, then the corresponding $h_k$
can be used directly as $h(z)$ in Theorem \ref{Rob}. However, it may happen that $|A_{-k,k}| < 1$ for all large $k\in\N$, in which case we consider
\[
g_k(z) := h_k(z)/A_{-k,k} = z^{-k} + \sum_{j=-k+1}^{k-1} A_{j,k}z^j/A_{-k,k} + z^k/A_{-k,k},\quad k\in\N\mbox{ is large},
\]
that satisfies the requirements of Theorem \ref{Rob} for $h(z)$ because $A_{-k,k}=Q_{2k}(0)$, and \eqref{const} gives
\begin{align*}
\lim_{k\to\infty} \|g_k(z)\|_E^{1/k} = \lim_{k\to\infty} \|z^{-k} Q_{2k}(z)/A_{-k,k} \|_E^{1/k} = \lim_{k\to\infty} \|z^{-k} Q_{2k}(z)\|_E^{1/k} < 1.
\end{align*}
We would like to apply Theorem \ref{Rob} to our function $F(z)$ defined in \eqref{fun}, which has Laurent expansions with integer coefficients
at $\infty$ and at $0$ according to Lemma \ref{expansions}. Note however that near $\infty$
\[
F(z)=\sum_{k=0}^\infty b_k z^{n-k},
\]
which formally does not fit the assumptions on this expansion in Theorem \ref{Rob}, as it contains $n$ positive powers of $z$. The latter problem
is easily remedied by subtracting the polynomial part of this expansion from $F$, and instead applying Theorem \ref{Rob} to
\[
F(z) - \sum_{k=0}^n b_k z^{n-k}.
\]
The conclusion of Theorem \ref{Rob} is that the above function is rational, hence $F$ is rational too. But Proposition \ref{Dimprop} implies now
that $\alpha$ is a root of unity, which is an obvious contradiction to our original assumptions of Theorem \ref{Bconj}, provided that $P_2$ is not a
perfect square.

We now use induction to handle the remaining case when $P_2$ is a perfect square. The basis is easily established by considering quadratic reciprocal polynomials
$P(z)=z^2+bz+1,\ b\in\Z,$ where $|b|\ge 3$ because $P$ cannot have roots on $\T$. For a root $\alpha$ of such $P$, we clearly have
\[
\house{\alpha} = \frac{|b|+\sqrt{b^2-4}}{2} \ge \frac{3+\sqrt{5}}{2} > (1+\sqrt{2})^{1/4},
\]
confirming \eqref{c-conj} for $n=2.$ Assume now that \eqref{c-conj} holds for all $\alpha$ of degree less than $n,$ satisfying the assumptions of this theorem.
Let $P_2(z)=(R(z))^2,$ where $R\in\Z[z]$ is a monic polynomial with roots from the set $\{\alpha_k^2\}_{k=1}^n$, which is symmetric with respect to $\T$.
As $P_2$ has double roots according to our assumption, the set $\{\alpha_k^2\}_{k=1}^n$ is composed of pairs $\alpha_j^2=\alpha_k^2,\ j\neq k.$ Hence
we can assume that $\alpha_{n/2+k} = -\alpha_k,\ k=1,\ldots,n/2,$ after a proper rearrangement. Thus $R(z)=\prod_{k=1}^{n/2} (z-\alpha_k^2)$, and it
inherits the reciprocal property from $P_2.$  Since $P(z)=\prod_{k=1}^{n} (z-\alpha_k)=\prod_{k=1}^{n/2} (z^2-\alpha_k^2)=R(z^2)$, where $P$ is
irreducible, we conclude that $R$ is also irreducible. Letting $\house{\alpha}=|\alpha_M|,\ 1\le M\le n/2,$ we obtain that $R$ is the minimal polynomial of
$\alpha_M^2$ of degree $n/2.$  The induction hypothesis implies that
\[
|\alpha_M|^2 = \house{\alpha_M^2} \ge (1+\sqrt{2})^{\frac{1}{n}},
\]
which gives
\[
\house{\alpha} = |\alpha_M| \ge (1+\sqrt{2})^{\frac{1}{2n}}
\]
as required.

\end{proof}

\medskip
{\bf Acknowledgements.}
This research was partially supported by NSF via the American Institute of Mathematics, and by the Vaughn Foundation
endowed Professorship in Number Theory. The author would like to thank Art\=uras Dubickas for several helpful discussions and
references.

\end{document}